\theoremstyle{plain}
\newtheorem{theorem}{Theorem}[section]
\newtheorem{lemma}[theorem]{Lemma}
\theoremstyle{definition}
\theoremstyle{remark}
\newtheorem{example}[theorem]{Example}
\newcommand{\abs}[1]{\lvert#1\rvert}
\newcommand{\norm}[1]{\lVert#1\rVert}
\renewcommand{\le}{\leqslant}
\renewcommand{\ge}{\geqslant}
\renewcommand{\mid}{\::\:}
\newcommand{\term}[1]{{\textit{\textbf{#1}}}}
\def\range{\mathrm{range}\,}
\def\tr{\mathrm{tr}\,}
\def\bbC{\mathbb C}
\def\bbD{\mathbb D}
\def\bbN{\mathbb N}
\def\bbO{\mathbb O}
\def\bbT{\mathbb T}
\def\cG{\mathcal G}
\def\cJ{\mathcal J}
\def\cS{\mathcal S}
\def\cU{\mathcal U}
\def\cV{\mathcal V}
\def\cW{\mathcal W}
\def\cX{\mathcal X}
\def\cY{\mathcal Y}
\def\iff{\Longleftrightarrow}
\def\implies{\Longrightarrow}
\let\span\relax
\DeclareMathOperator{\span}{span}
\DeclareMathOperator{\rk}{rk}
\begin{document}
\baselineskip 18pt

\title[On matrix semigroups bounded above and below]
{On matrix semigroups bounded above and below}

\author[A.I.~Popov]{Alexey I. Popov}
\address{Department of Pure Mathematics, University of Waterloo, Waterloo, ON, N2L\,3G1. Canada}
\email{a4popov@uwaterloo.ca}
\thanks{Research supported in part by NSERC (Canada)}

\keywords{Matrix semigroup, group of unitaries, partial isometry, similarity}
\subjclass[2010]{Primary: 20M20. Secondary: 47D03, 22D10}

\date{\today.}
\begin{abstract}
An irreducible norm closed semigroup of complex matrices is simultaneously similar to a semigroup of partial isometries if and only if (a) the norms of all nonzero members of it are uniformly bounded above and below, and (b) its idempotents commute. This is a generalization of the well-known result on bounded groups.
\end{abstract}

\maketitle


\section{Introduction}\label{section-introduction}

It is a well-known classical result, which can be found in many books on group representations (see, e.g., \cite[p.183]{NS82} or \cite[Theorem 3.1.5]{RR00}), that if $\cG$ is a bounded group of matrices, then $\cG$ is similar to a group of unitary matrices. In this paper we investigate a semigroup analogue of this result.

Recall that a matrix $T$ (or, more generally, an operator on a Hilbert space) is a partial isometry if both $T^*T$ and $TT^*$ are self-adjoint projections. The concept of a partial isometry seems a reasonable non-invertible substitute for the concept of a unitary matrix. The structure of general semigroups of partial isometries in separable Hilbert spaces was investigated in a recent study~\cite{PR} which, in particular, was a motivation for the present paper. The main result of the work~\cite{PR} is an integral representation of self-adjoint semigroups (that is, semigroups satisfying the property that $T\in\cS$ implies $T^*\in\cS$) of partial isometries acting on a separable Hilbert space. 

For the case when the underlying Hilbert space is finite-dimensional, the following structure result was obtained. It was shown that if $\cS$ is an irreducible semigroup of partial isometries (not necessarily self-adjoint) then, after a simultaneous unitary similarity, every member of $\cS$ is a block matrix with blocks coming from a fixed unitary group, such that every block row and every block column has at most one non-zero block. To state this result more precisely, let us first introduce some notation which will also be used throughout the paper.

If $n\in\bbN$ and $1\le i,j\le n$, then $E^{(n)}_{ij}$ will denote the $n\times n$ matrix whose all entries are equal to zero except for the $(i,j)$-entry which is equal to one. If $n$ is clearly understood from the context, we will use the symbol $E_{ij}$ instead of $E^{(n)}_{ij}$. If $T$ is a $k\times k$ matrix, then the matrix $E^{(n)}_{ij}\otimes T$ will denote the $kn\times kn$ matrix consisting of $k\times k$ blocks, (i.e., it is an $n\times n$ block matrix) such that the $(i,j)$-block is equal to $T$ and all the other blocks are equal to zero. Again, if $n$ is clearly understood from the context, we will use the symbol $E_{ij}\otimes T$ instead. The $k\times k$ identity matrix will be denoted by~$I_k$.

The semigroup $\cS_0^{(n)}$ is defined by $\cS_0^{(n)}=\{E_{ij}^{(n)}\mid 1\le i,j\le n\}\cup\{\bbO_n\}$ where $\bbO_n$ is the $n\times n$ zero matrix. If $\cU$ is a group of unitary $k\times k$ matrices, then we let $\cS_0^{(n)}(\cU)=\{E_{ij}^{(n)}\otimes U\mid U\in\cU,1\le i,j\le n\}\cup\{\bbO_{kn}\}$. That is, $\cS_0^{(n)}(\cU)$ is the semigroup of all the $n\times n$ block matrices such that at most one block is nonzero, and the non-zero block is a matrix from~$\cU$. The semigroup $\cS^{(n)}_1(\cU)$ is defined in a similar way, but we allow one nonzero block in each row and in each column. Again, if $n$ is understood from the context, we will use the lighter symbols $\cS_0(\cU)$ and $\cS_1(\cU)$ instead of $\cS_0^{(n)}(\cU)$ and $\cS_1^{(n)}(\cU)$. With this notation, the result from~\cite{PR} mentioned before can be stated as follows.

\begin{theorem}[\cite{PR}]\label{part-isom-semigroups}
Suppose that $\cS$ is an irreducible norm closed matrix semigroup consisting of partial isometries. Then there exists $n\in\bbN$ and an irreducible group $\cU$ of unitary matrices such that, after a unitary similarity,
$$
\cS^{(n)}_0(\cU)\subseteq \cS\subseteq \cS^{(n)}_1(\cU).
$$
The size of matrices in $\cU$ is equal to the minimal non-zero rank of operators in~$\cS$.
\end{theorem}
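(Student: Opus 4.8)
The plan is to locate the promised group $\cU$ and block decomposition inside the minimal-rank part of $\cS$. Write $r$ for the minimal nonzero rank occurring in $\cS$ and let $\cM$ denote the set of elements of rank exactly $r$; since a product of partial isometries is again a partial isometry and ranks cannot increase under multiplication, $\cM\cup\{\bbO\}$ is a compact (norm-closed, bounded) ideal of $\cS$. First I would record two facts that the partial-isometry hypothesis buys for free. Any idempotent $E\in\cS$ is a norm-one idempotent, hence an orthogonal projection. Moreover, if $E,F\in\cS$ are projections then $EF\in\cS$ is a partial isometry, and for orthogonal projections $EF$ is a partial isometry only when $E$ and $F$ commute (the nonzero eigenvalues of $FEF$ are the squared cosines of the principal angles between the ranges, and these lie in $\{0,1\}$ exactly when the two ranges are in compatible position). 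Thus all idempotents of $\cS$ commute and are projections; two distinct projections of rank $r$ then satisfy $EF=FE$, and $EF\in\cS$ has rank $<r$, so minimality forces $EF=\bbO$. Hence the rank-$r$ projections of $\cS$ have pairwise orthogonal ranges $H_1,\dots,H_n$, each of dimension $r$.

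Next I would produce these projections and the group. Using Burnside's theorem, $\Span\cS$ is the full matrix algebra, so for any $P\in\cM$ the identity $PP^*P=P$ gives some $X\in\cS$ with $PXP\neq\bbO$; this is the irreducibility input that rules out the degenerate ``nilpotent'' case and makes $\cM\cup\{\bbO\}$ a completely $0$-simple semigroup in the sense of Rees. The key mechanism for extracting idempotents and inverses is compactness: if $g\in\cM$ has equal initial and final spaces $H$, then $g$ acts as a unitary on $H$ and as $0$ on $H^\perp$, so a subsequence of its powers $g^{n_k}$ converges to the orthogonal projection onto $H$, giving a rank-$r$ idempotent in $\cS$; the same limiting argument produces, for any such $g$, its inverse on $H$ inside $\cS$. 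Fixing one block, say $H_1=\range E_1$, the elements of $\cS$ that map $H_1$ onto $H_1$ restrict to a norm-closed subsemigroup of the unitary group of $H_1$, hence to a compact group $\cU$ of unitary $r\times r$ matrices; this is the group in the statement, and its size is $r$, the minimal nonzero rank.

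With the blocks and $\cU$ in hand I would pin down the matrix form. For $S\in\cS$ and a block projection $E_i$, both $E_iS$ and $SE_i$ lie in $\cM\cup\{\bbO\}$; a short rank count against the orthogonality of the $H_j$ shows that a nonzero element of $\cM$ has initial space equal to a single $H_j$ and final space equal to a single $H_i$, so every $S\in\cS$ has at most one nonzero block in each block row and each block column. The space $\Span\{SH_1\mid S\in\cS\}$ is $\cS$-invariant and nonzero, hence equals the whole space by irreducibility, so the $H_i$ exhaust $\bbC^N$ and $n$ is finite. Choosing connecting elements $W_i\in\cM$ mapping $H_1$ onto $H_i$ (with $W_1=E_1$), whose reverses $H_i\to H_1$ also lie in $\cS$ by the compactness argument together with reachability, I would use the $W_i$ to identify each $H_i$ with $\bbC^r$. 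Under this identification every member of $\cM$ becomes $E_{ij}^{(n)}\otimes u$ with $u=W_i^{-1}SW_j\in\cU$, which yields $\cS\subseteq\cS_1^{(n)}(\cU)$; conversely every $E_{ij}^{(n)}\otimes u$ equals the product $W_i\,u\,W_j^{-1}$ of elements of $\cS$, giving $\cS_0^{(n)}(\cU)\subseteq\cS$. Finally, a $\cU$-invariant subspace $K\subsetneq H_1$ would spread to the proper nonzero $\cS$-invariant subspace $\bigoplus_i W_i(K)$, contradicting irreducibility, so $\cU$ is irreducible.

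I expect the main obstacle to be the middle step: showing that $\cM\cup\{\bbO\}$ really carries the completely $0$-simple (Rees) structure over the unitary group $\cU$, and in particular that enough idempotents and ``inverse'' partial isometries actually lie in the norm-closed $\cS$. The delicate points are guaranteeing a nonzero rank-$r$ idempotent at every reachable block (so that each occurring initial or final space is one of the $H_i$, not merely some $r$-dimensional subspace) and that the connecting maps $W_i$ admit reverses inside $\cS$; both are handled by the limit of the powers $g^{n_k}$ for elements $g$ with matching initial and final spaces, but arranging such elements and propagating the conclusion around the reachability graph is where the real work lies. One should also note the harmless degenerate case $n=1$, in which $\cS$ is a compact irreducible unitary group and the inclusions reduce to the classical statement on bounded groups.
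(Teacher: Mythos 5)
First, a point of context: this paper never proves Theorem~\ref{part-isom-semigroups} --- it is imported from the preprint [PR] and used as a black box --- so there is no in-paper proof to compare yours against, and your proposal must stand on its own. Its architecture (the minimal-rank ideal $\cM\cup\{\bbO\}$, idempotents being commuting orthogonal projections with pairwise orthogonal ranges $H_1,\dots,H_n$, a compactness argument via powers, connecting partial isometries $W_i$ with reverses in $\cS$, and a Rees-type block structure over a compact unitary group) is the natural one and is plausibly close to the argument in [PR]. One slip of wording should be fixed: ``a product of partial isometries is again a partial isometry'' is false in general --- that failure is exactly why the theorem has content --- but it is harmless here because $\cS$ is assumed to be a \emph{semigroup} consisting of partial isometries, so products stay in $\cS$ and are partial isometries by hypothesis, not by algebra.

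The genuine gap is the step you yourself flag as ``where the real work lies'' and then leave undone: you must show that the initial and final spaces of \emph{every} nonzero $T\in\cM$ are among the $H_i$, i.e.\ are ranges of rank-$r$ idempotents actually lying in $\cS$. Your mechanism (if $g\in\cM$ has equal initial and final space, a subsequence of its powers converges to the projection onto that space, which then lies in the norm closed $\cS$) is correct, but you never exhibit such elements $g$ attached to a given $T$, so nothing so far prevents a minimal-rank element whose initial space is some stray $r$-dimensional subspace. Here is how to close it. \emph{Dichotomy:} if $T\in\cM$ has initial space $K$ and final space $L$ and $T^2\ne\bbO$, then $K=L$. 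Indeed, $T^2\in\cS$ has rank $r$ by minimality; for a unit vector $x$ in the initial space of $T^2$, the chain $1=\norm{T^2x}\le\norm{Tx}\le\norm{x}=1$ forces first $x\in K$ and then $Tx\in K$; hence the initial space of $T^2$ is exactly $K$ and $T(K)\subseteq K$, so $L=T(K)=K$ by dimension count. Now given any nonzero $T\in\cM$, Lemma~\ref{irreducibility-criterion} yields $X\in\cS$ with $TXT\ne\bbO$. Minimality of $r$ and dimension counts give: $XT$ and $TX$ lie in $\cM$; the initial space of $XT$ (and of $TXT$) is $K$ and the final space of $TX$ (and of $TXT$) is $L$; moreover $(XT)^2=X(TXT)\ne\bbO$ because $X$ is injective on $L=\range(TXT)$ (since $\dim X(L)=\rk(XT)=r$), and $(TX)^2=(TXT)X\ne\bbO$ because $TX\ne\bbO$ means precisely that $\range X\not\subseteq K^{\perp}=\ker(TXT)$. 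By the dichotomy, $XT$ has initial and final space both equal to $K$, and $TX$ has both equal to $L$; subsequential limits of their powers are then the orthogonal projections onto $K$ and onto $L$, and these lie in $\cS$. So $K$ and $L$ are indeed among the $H_i$, and with this lemma in hand the remaining steps of your outline (the $H_i$ span the whole space by irreducibility, at most one nonzero block per row and column by the isometric dimension count, transport of blocks into $\cU$ by the $W_i$ and their reverses, and irreducibility of $\cU$) go through as you describe.
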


In this paper, we characterize the irreducible norm closed matrix semigroups similar to semigroups of partial isometries (Theorem~\ref{main-result}). We obtain  conditions on the semigroup in the same spirit as in the result about groups mentioned in the first paragraph which make sure that the semigroup is similar to a semigroup of partial isometries. Namely, we show that an irreducible semigroup is similar to a semigroup of partial isometries if and only if its nonzero elements are uniformly bounded in norm above and below and its idempotent elements commute. This turns out also to be equivalent to the condition that the spectrum of every element in the semigroup is contained in $\{0\}\cup\bbT$ and the idempotents in the semigroup commute. Notice that all these conditions are trivially satisfied for bounded groups of matrices. 

Our proofs work for complex matrices. Throughout the paper, we consider $\bbC^n$ as an inner product space, with the standard inner product. In particular, the norm in $\bbC^n$, denoted by $\norm{\cdot}$, is the norm defined by this inner product.

The symbol $\bbD$ will denote the unit disk of $\bbC$ and $\bbT$ will denote the unit circle.  If $T$ is an $n\times n$ matrix, then $\sigma(T)$ will denote the spectrum of $T$ and $r(T)$ will denote the spectral radius of~$T$. Whenever we mention a norm of a matrix, we always mean its operator norm. That is, $\norm{T}=\sup\{\norm{Tx}\mid x\in\bbC^n,\ \norm{x}\le 1\}$. If $X$ and $Y$ are subspaces of $\bbC^n$, then the symbol $X\oplus Y$ denotes the direct sum of $X$ and $Y$. In particular, this means that $X$ and $Y$ are orthogonal to each other. The symbol $X\ominus Y$ means the space $\{x\in X\mid x\perp Y\}$. The orthogonal complement of a space $X\subseteq\bbC^n$ is denoted by $X^\perp$. A matrix $P$ with the property that $P=P^2$ will be referred as idempotent. An idempotent $P$ that, in addition, satisfies $P=P^*$, will be called a projection.

\term{Acknowledgment}. The author would like to thank Heydar Radjavi for numerous useful and stimulating discussions and the anonymous referee for pointing out the problem demonstrated in Example~\ref{closure-important}.


\section{Similarity to semigroups of partial isometries}\label{section-one}

This section contains our main result (Theorem~\ref{main-result}) which characterizes the irreducible matrix semigroups similar to semigroups of partial isometries. We will need a number of lemmas in our proof.

The following two lemmas are well-known and are very useful when working with bounded irreducible matrix semigroups. Their proofs use manipulations with Jordan forms and can be found in \cite[pp.~48-49]{RR00}.

\begin{lemma}\label{bounded-powers-unitary}
Let $T$ be a matrix such that $\sigma(T)\subseteq\bbT$ and the set $\{\norm{T^n}\mid n\in\bbN\}$ is bounded. Then $T$ is similar to a unitary diagonal matrix.
\end{lemma}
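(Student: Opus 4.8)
The plan is to reduce everything to the Jordan form of $T$ and argue eigenvalue-by-eigenvalue. Since similarity preserves both the spectrum and (up to a bounded factor) the boundedness of the power sequence, I would put $T$ in Jordan canonical form $T = SJS^{-1}$, where $J = \bigoplus_k J_k$ is a direct sum of Jordan blocks. The sequence $\{\norm{T^n}\}$ is bounded if and only if $\{\norm{J^n}\}$ is bounded, because $\norm{J^n} = \norm{S^{-1}T^nS} \le \norm{S^{-1}}\,\norm{T^n}\,\norm{S}$ and symmetrically. So it suffices to understand when a single Jordan block has bounded powers.

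The key computation is the following. Let $J_k$ be a $k\times k$ Jordan block with eigenvalue $\lambda$, so $J_k = \lambda I_k + N$ where $N$ is the nilpotent shift. By the hypothesis $\sigma(T)\subseteq\bbT$ we have $\abs{\lambda}=1$. The powers are $J_k^n = \sum_{j=0}^{k-1}\binom{n}{j}\lambda^{n-j}N^j$, and since the matrices $N^j$ are linearly independent, the norm $\norm{J_k^n}$ grows at least like the largest binomial coefficient $\binom{n}{j}$ appearing with a nonzero $N^j$ term. Concretely, if $k\ge 2$ the $(1,2)$-entry of $J_k^n$ equals $n\lambda^{n-1}$, which has modulus $n\to\infty$; hence $\norm{J_k^n}\ge\abs{\text{its }(1,2)\text{-entry}}=n$ is unbounded. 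Therefore boundedness of $\{\norm{J^n}\}$ forces every Jordan block to have size $k=1$, i.e. $J$ is diagonal, with diagonal entries on $\bbT$.

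Once $J$ is diagonal with unimodular entries, $J$ is itself a unitary diagonal matrix, and $T = SJS^{-1}$ is similar to this unitary diagonal matrix, which is exactly the conclusion. I would phrase the final step as: diagonalizability (absence of nontrivial Jordan blocks) together with $\sigma(T)\subseteq\bbT$ means $T$ is similar to $\mathrm{diag}(\lambda_1,\dots,\lambda_n)$ with each $\abs{\lambda_i}=1$.

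The only real obstacle is making the lower bound $\norm{J_k^n}\ge n$ rigorous for $k\ge 2$, but this is immediate once one extracts a single growing matrix entry and uses that the operator norm dominates the modulus of any entry. There is no genuine difficulty here; the lemma is standard, and the excerpt itself attributes it to manipulations with Jordan forms in \cite[pp.~48--49]{RR00}. The cleanest writeup simply records the entrywise growth of $J_k^n$ and concludes.
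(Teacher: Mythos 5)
Your proof is correct and is exactly the ``manipulations with Jordan forms'' argument that the paper delegates to \cite[pp.~48--49]{RR00}: reduce to Jordan blocks, note that a block of size $k\ge 2$ with unimodular eigenvalue has $(1,2)$-entry $n\lambda^{n-1}$ in its $n$-th power, so boundedness forces all blocks to be $1\times 1$ with entries in $\bbT$. Nothing further is needed; the entrywise lower bound $\norm{J_k^n}\ge n$ you single out is the standard and rigorous way to finish.
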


\begin{lemma}\label{idempotents}
Let $\cS$ be a bounded norm closed matrix semigroup and $T\in\cS$ be such that $r(T)=1$. Then $\cS$ contains an idempotent $P$ such that $\range(P)\subseteq\range(T)$ and $\ker(P)\supseteq\ker(T)$.
\end{lemma}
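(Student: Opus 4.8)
The plan is to build the required idempotent $P$ as the limit of a subsequence of the powers $T^n$, using that every $T^n$ lies in the bounded, norm closed semigroup $\cS$. First I would apply the spectral (Riesz) decomposition of $T$ to split $\bbC^n = M \dotplus N$ into $T$-invariant, complementary (generally non-orthogonal) subspaces, where $M$ is the sum of the generalized eigenspaces for the eigenvalues of modulus $1$ and $N$ is the sum of those for eigenvalues of modulus strictly less than $1$; since $r(T)=1$, the space $M$ is nonzero. In a basis adapted to this decomposition $T$ is block diagonal, $T = T_M \dotplus T_N$, with $\sigma(T_M)\subseteq\bbT$ and $r(T_N)<1$, and hence $T^n = T_M^n \dotplus T_N^n$ for every $n$. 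The contractive part is immediate: $r(T_N)<1$ forces $T_N^n\to 0$.

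Next I would handle the peripheral part $T_M$. Because each power $T^n\in\cS$ and $\cS$ is bounded, the set $\{T_M^n \mid n\in\bbN\}$ is bounded, and $\sigma(T_M)\subseteq\bbT$; thus Lemma~\ref{bounded-powers-unitary} applies, and $T_M$ is diagonalizable with all eigenvalues on $\bbT$. In particular $T_M$ is similar to a unitary, so the powers $T_M^n$ form a relatively compact set of invertible matrices on $M$. Choosing a strictly increasing sequence $(n_k)$ with $T_M^{n_k}\to h$ (with $h$ invertible, being similar to a limit of unitaries) and passing to the successive exponent differences $m_k = n_{k+1}-n_k \ge 1$, one gets $T_M^{m_k} = T_M^{n_{k+1}}\,(T_M^{n_k})^{-1} \to h h^{-1} = I_M$. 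Equivalently, writing $T_M = \sum_j \lambda_j Q_j$ with $\abs{\lambda_j}=1$, one selects $m_k$ so that $\lambda_j^{m_k}\to 1$ simultaneously for all $j$. This simultaneous recurrence of the unimodular eigenvalues back to $1$ is the one genuinely nontrivial point of the argument, and it is where compactness (or a Kronecker/Dirichlet-type recurrence) does the work.

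Finally I would combine the two parts. Along the sequence $(m_k)$ of positive integers,
$$
T^{m_k} = T_M^{m_k}\dotplus T_N^{m_k}\ \longrightarrow\ I_M\dotplus 0_N =: P,
$$
the spectral idempotent projecting onto $M$ along $N$, so that $P^2=P$. Since each $T^{m_k}\in\cS$ and $\cS$ is norm closed, $P\in\cS$. It then remains to verify the two containments. As $T_M$ is invertible on $M$, we have $T(M)=M$, whence $\range(P)=M\subseteq\range(T)$; and since $\ker(T)$ is contained in the generalized eigenspace for the eigenvalue $0\in\bbD$, it sits inside $N=\ker(P)$, giving $\ker(T)\subseteq\ker(P)$. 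This yields the desired idempotent.
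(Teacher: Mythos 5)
Your route is essentially the one the paper relies on: the paper does not prove this lemma itself but cites the Jordan-form argument of \cite[pp.~48--49]{RR00}, which is exactly your scheme --- split off the peripheral spectrum via the Riesz decomposition, use Lemma~\ref{bounded-powers-unitary} on the unimodular part, and obtain the spectral idempotent $I_M \dotplus 0_N$ as a norm limit of powers of $T$, which lies in $\cS$ by closedness. The final verifications ($\range P = M = T(M) \subseteq \range T$ since $T_M$ is invertible, and $\ker T \subseteq N = \ker P$) are also correct.

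However, one step fails as literally written: you need $m_k \to \infty$, and your choice of $(n_k)$ does not guarantee it. From $T_M^{n_k} \to h$ you pass to $m_k = n_{k+1} - n_k$ and conclude $T^{m_k} = T_M^{m_k} \dotplus T_N^{m_k} \to I_M \dotplus 0_N$. The convergence $T_M^{m_k} \to hh^{-1} = I_M$ is fine, but $T_N^{m_k} \to 0$ uses $r(T_N) < 1$ \emph{together with} $m_k \to \infty$; nothing forces the gaps $n_{k+1}-n_k$ to grow. Concretely, if $T_M = I_M$ (single eigenvalue $1$), then $n_k = k$ is a perfectly legitimate convergent choice, giving $m_k = 1$ for all $k$ and $T^{m_k} = T$, which is not idempotent when $T_N \ne 0$. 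The repair is one line: since every subsequence of a convergent sequence converges to the same limit, first thin $(n_k)$ so that $n_{k+1} - n_k \ge k$ (or any gaps tending to infinity); then $m_k \to \infty$, so $T_N^{m_k} \to 0$, and the rest of your argument goes through verbatim. With that amendment the proof is complete and matches the standard cited argument.
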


The following lemma is standard.

\begin{lemma}\label{irreducibility-criterion}
Let $\cS$ be a matrix semigroup. Then $\cS$ is irreducible if, and only if, for every pair $A,B$ of nonzero matrices, the set $A\cS B=\{ATB\mid T\in\cS\}$ contains a nonzero member.
\end{lemma}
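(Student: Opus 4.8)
The plan is to prove the two implications separately, treating the easy direction directly and the substantive one by contradiction, with the whole argument resting on manufacturing common $\cS$-invariant subspaces out of $A$ and $B$. Throughout I use that $\cS$ is irreducible precisely when the only subspaces $V\subseteq\bbC^n$ with $\cS V\subseteq V$ are $\{0\}$ and $\bbC^n$.

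For the direction ``criterion $\implies$ irreducible'' I would argue contrapositively. Assuming $\cS$ has a common invariant subspace $V$ with $\{0\}\subsetneq V\subsetneq\bbC^n$, I would exhibit a single pair of nonzero matrices $A,B$ with $A\cS B=\{0\}$. Take $B$ to be any nonzero matrix with $\range B\subseteq V$ (a rank-one $B=bb^*$ with $0\neq b\in V$ works), and take $A$ to be any nonzero matrix with $V\subseteq\ker A$ (a rank-one $A=uw^*$ with $u\neq 0$ and $0\neq w\in V^\perp$ works, since $V\neq\bbC^n$ forces $V^\perp\neq\{0\}$). Then for every $T\in\cS$ one has $\range(TB)=T(\range B)\subseteq TV\subseteq V\subseteq\ker A$, so $ATB=0$; hence $A\cS B=\{0\}$ and the criterion fails.

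For the converse, ``irreducible $\implies$ criterion'', I would suppose toward a contradiction that $A$ and $B$ are nonzero yet $ATB=0$ for all $T\in\cS$, and feed this relation into two invariant subspaces in succession. First set $V=\Span\bigcup_{T\in\cS}\range(TB)$; because $\cS$ is a semigroup this $V$ is $\cS$-invariant, and the hypothesis says exactly that $A$ annihilates each $\range(TB)$, so $V\subseteq\ker A\neq\bbC^n$. Irreducibility then forces $V=\{0\}$, i.e.\ $TB=0$ for every $T\in\cS$. Consequently $\range B\subseteq K$, where $K=\bigcap_{T\in\cS}\ker T$ is again $\cS$-invariant and is nonzero since $B\neq 0$; irreducibility forces $K=\bbC^n$, whence $\cS=\{0\}$, which is impossible as soon as $\cS$ contains a nonzero matrix.

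The step I expect to be the crux is the choice of $V$: it is tempting to include $\range B$ itself in the span, but then one loses the containment $V\subseteq\ker A$, since the hypothesis only gives $ATB=0$ and not $AB=0$. Excluding the bare $\range B$ keeps $V$ inside $\ker A$, and this is exactly why one needs the \emph{second} invariant subspace $K$ to close the argument. The only loose end is the degenerate semigroup $\cS=\{0\}$: in dimension $\ge 2$ it is reducible and both conditions fail simultaneously, so the equivalence still holds and this case may simply be set aside rather than mistaken for a counterexample.
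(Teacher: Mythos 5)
Your proof is correct. The paper offers no proof of this lemma at all (it is simply quoted as ``standard''), and your two-step argument --- first showing the invariant subspace $\Span\bigcup_{T\in\cS}\range(TB)$ must be zero, then that $\bigcap_{T\in\cS}\ker T$ must be everything --- is exactly the standard argument being invoked, including the correct observation that the degenerate case $\cS=\{0\}$ must be set aside.
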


The next lemma was proved in \cite[Lemma~3.1]{OR97}. The assumptions of \cite[Lemma~3.1]{OR97} are slightly stronger than what we have (namely, $n_0$ is assumed to be equal to one); however, exactly the same proof works for the general~$n_0$.

\begin{lemma}\label{polynomials-zero}
Suppose that $n_0\in\bbN$, $p_1,p_2,\dots,p_r$ are polynomials and $\alpha_1,\alpha_2,\dots,\alpha_r$ are distinct nonzero complex numbers such that
$$
\sum_{i=1}^rp_i(n)\alpha_i^n=0
$$
holds for all $n\ge n_0$. Then all the polynomials $p_1,p_2,\dots,p_r$ are identically zero.
\end{lemma}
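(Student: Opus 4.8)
The plan is to prove this by induction on the number $r$ of distinct bases $\alpha_i$, using the shift operator on sequences to peel off one term at a time. Write $a(n)=\sum_{i=1}^r p_i(n)\alpha_i^n$ and let $S$ be the shift operator defined by $(Sa)(n)=a(n+1)$, so that the hypothesis reads $a(n)=0$ for all $n\ge n_0$. For the base case $r=1$, the identity $p_1(n)\alpha_1^n=0$ with $\alpha_1\ne 0$ forces $p_1(n)=0$ for all $n\ge n_0$, and a polynomial vanishing at infinitely many points is identically zero.

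The engine of the argument is the elementary computation that, for any polynomial $p$ and scalars $\alpha,\beta$,
$$
(S-\beta)\bigl[p(n)\alpha^n\bigr]=\alpha^n\bigl(\alpha\,p(n+1)-\beta\,p(n)\bigr).
$$
When $\beta=\alpha$ the bracketed polynomial is $\alpha\bigl(p(n+1)-p(n)\bigr)$, whose degree is one less than $\deg p$, so that $(S-\alpha)^{\deg p+1}$ annihilates $p(n)\alpha^n$ completely. When $\beta\ne\alpha$ the leading coefficient of the bracketed polynomial is $(\alpha-\beta)$ times that of $p$; hence the degree of $p$ is preserved and the accompanying polynomial remains nonzero whenever $p\not\equiv 0$. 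I also note that, since $a(n)=0$ for all $n\ge n_0$, each application of an operator $S-\beta$ again produces a sequence vanishing for all $n\ge n_0$, so the threshold $n_0$ never shrinks and causes no difficulty.

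For the inductive step, apply the operator $(S-\alpha_r)^{d_r+1}$, where $d_r=\deg p_r$, to the relation $a(n)=0$. By the computation above this annihilates the $i=r$ term entirely, while transforming each remaining term $p_i(n)\alpha_i^n$ with $i<r$ into $\tilde p_i(n)\alpha_i^n$ where $\deg\tilde p_i=\deg p_i$ and $\tilde p_i\not\equiv 0$ precisely when $p_i\not\equiv 0$. The outcome is a relation $\sum_{i=1}^{r-1}\tilde p_i(n)\alpha_i^n=0$, valid for all $n\ge n_0$ and involving only $r-1$ distinct nonzero bases. The inductive hypothesis gives $\tilde p_i\equiv 0$, hence $p_i\equiv 0$, for every $i<r$. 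Substituting this back into the original identity leaves $p_r(n)\alpha_r^n=0$, which by the base-case argument yields $p_r\equiv 0$.

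The only point requiring care is the bookkeeping of degrees under $S-\beta$: one must check that the operator preserves the degree of each surviving exponential term while strictly lowering the degree of the targeted one. Once the displayed identity is established this is routine. An alternative route encodes the same information through generating functions, where the sequence $a(n)$ has generating function with denominator $\prod_i(1-\alpha_i z)^{\deg p_i+1}$; vanishing of $a(n)$ for large $n$ forces this rational function to be a polynomial, which by uniqueness of partial fractions is impossible unless every $p_i\equiv 0$.
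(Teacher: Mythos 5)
Your proof is correct. It is worth noting, though, what the paper actually does with this lemma: it gives no proof at all, but imports it from \cite{OR97} (Lemma~3.1 there), adding only the remark that the proof in \cite{OR97} is stated for $n_0=1$ but ``exactly the same proof works for the general $n_0$.'' Your argument is therefore a genuinely self-contained alternative to the paper's citation. The annihilator method you use --- the identity $(S-\beta)\bigl[p(n)\alpha^n\bigr]=\alpha^n\bigl(\alpha\,p(n+1)-\beta\,p(n)\bigr)$, with degree strictly dropping when $\beta=\alpha$ and degree (hence nonvanishing) preserved when $\beta\ne\alpha$ --- is verified correctly, and the induction on $r$ closes properly: the inductive hypothesis kills $\tilde p_i$, degree preservation then kills $p_i$, and the base case finishes off $p_r$. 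Pleasingly, your explicit observation that each application of $S-\beta$ preserves vanishing on $\{n\mid n\ge n_0\}$ is precisely the point that makes the general-$n_0$ statement work, i.e., exactly the issue the paper disposes of by assertion; your write-up makes that step transparent rather than delegated. Two minor quibbles, neither a gap: if $p_r\equiv 0$ then $d_r=\deg p_r$ is undefined, but in that case the $r$-th term is absent and the inductive hypothesis applies directly (or take one application of $S-\alpha_r$, which annihilates the zero sequence anyway); and the closing generating-function sketch is looser than the main argument --- it would need the observation that a rational function whose denominator $\prod_i(1-\alpha_i z)^{\deg p_i+1}$ is coprime to the numerator cannot be a polynomial unless all pole terms vanish --- but since it is offered as an aside it does not affect the validity of the proof.
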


The following theorem is the main result of this paper.

\begin{theorem}\label{main-result}
Let $\cS$ be an irreducible norm closed semigroup of complex $n\times n$ matrices. Then the following conditions are equivalent.
\begin{enumerate}
\item $\cS$ is simultaneously similar to a semigroup of partial isometries;
\item the idempotent members of $\cS$ all commute and $\sigma(T)\subseteq\{0\}\cup\bbT$ for all $T\in\cS$;
\item the idempotent members of $\cS$ all commute and there exist $c_1,c_2>0$ such that
$$
c_1\le\norm{T}\le c_2
$$
for all nonzero $T\in\cS$.
\end{enumerate}
\end{theorem}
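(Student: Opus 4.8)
The plan is to prove the cycle $(i)\Rightarrow(ii)\Rightarrow(iii)\Rightarrow(i)$, the last implication being the substantive one. For $(i)\Rightarrow(ii)$, write $\cS=R^{-1}\cT R$ with $\cT$ a semigroup of partial isometries. Since $X\mapsto R^{-1}XR$ is a homeomorphism, $\cT$ is again irreducible and norm closed, so Theorem~\ref{part-isom-semigroups} places it, after a unitary similarity, between $\cS_0^{(m)}(\cU)$ and $\cS_1^{(m)}(\cU)$ for some $m$ and some unitary group $\cU$. Each member of $\cS_1^{(m)}(\cU)$ is block-monomial with unitary blocks; decomposing its support partial permutation into chains and cycles shows that its spectrum sits in $\{0\}\cup\bbT$ (chains contribute $0$, and a cycle of length $\ell$ contributes $\ell$-th roots of eigenvalues of a product of unitaries). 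As the spectrum is a similarity invariant, $\sigma(T)\subseteq\{0\}\cup\bbT$ for all $T\in\cS$. For the idempotents I would use that a nonzero idempotent of norm at most $1$ is automatically an orthogonal projection, so every idempotent of $\cT$ is a self-adjoint block-monomial element; such an element is forced to be a diagonal coordinate projection $\sum_{i\in I}E_{ii}\otimes I_k$ (a symmetric support with at most one block per row and column can only consist of fixed points, since a transposition would create a nonzero diagonal block in the square), and these commute. Commutativity is preserved by the similarity, so $(ii)$ holds.

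For $(ii)\Rightarrow(iii)$ the delicate point is the upper bound, i.e.\ boundedness of $\cS$: the spectral hypothesis alone allows a single matrix to have enormous norm (a peripheral Jordan block) or tiny norm (a small nilpotent), so irreducibility and the commuting idempotents must be used to rule both out. I would first observe that $\sigma(T)\subseteq\{0\}\cup\bbT$ gives $r(T)\le1$; that Levitzki's theorem furnishes a non-nilpotent element (as $\cS$ is irreducible and, in the nontrivial case, $n\ge2$); and that such an element satisfies $r(T)=1$. Choosing $n_k$ so that every peripheral eigenvalue $\mu$ satisfies $\mu^{n_k}\to1$ simultaneously, the limit $E=\lim T^{n_k}$ lies in $\cS$ (norm closedness) and is the spectral idempotent onto the peripheral generalized eigenspace. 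The plan is to show that a peripheral Jordan block is impossible and that no nonzero eigenvalue of modulus $<1$ occurs: in either case one pairs $E$ with a complementary idempotent produced from irreducibility (Lemma~\ref{irreducibility-criterion}) and, using commutativity of idempotents, manufactures elements of $\cS$ whose norms blow up or shrink to $0$; Lemma~\ref{polynomials-zero}, applied to the entries $\sum_i p_i(n)\mu_i^n$ of $T^n$, isolates exactly which asymptotic terms survive. Once boundedness is secured, the lower bound is easy, since every element of norm $<1$ has spectral radius $<1$, hence is nilpotent, and a normalized-limit argument keeps the nonzero nilpotents away from $0$.

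The construction $(iii)\Rightarrow(i)$ is where I would build the normal form of Theorem~\ref{part-isom-semigroups} by hand. Boundedness and Lemma~\ref{idempotents} first yield nonzero idempotents in $\cS$; by hypothesis they commute, so for a minimal-rank idempotent $E$ and any idempotent $F$ the product $EF=FE$ is an idempotent below $E$, whence distinct minimal idempotents are orthogonal ($EF=FE=0$). Letting $k$ be the minimal nonzero rank of $\cS$, I would show the minimal idempotents have rank exactly $k$, fix one with range $V$, and use irreducibility to decompose $\bbC^n=V_1\oplus\dots\oplus V_{n_0}$ into ranges of pairwise-orthogonal minimal idempotents that $\cS$ permutes (sending each $V_i$ into some $V_j$ or to $0$). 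I would then define a new inner product by declaring the $V_i$ mutually orthogonal, fixing an inner product on $V_1$, and transporting it to each $V_i$ along a semigroup element carrying $V_1$ onto $V_i$; the self-maps of a block form a bounded family with spectrum on $\bbT$, so by Lemma~\ref{bounded-powers-unitary} they are simultaneously similar to unitaries, which I would arrange to be literally unitary in the transported inner product, producing the group $\cU$. In this inner product every $T\in\cS$ is block-monomial with unitary blocks, i.e.\ lies in $\cS_1^{(n_0)}(\cU)$, so $\cS$ is similar to a semigroup of partial isometries.

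The main obstacle is the consistency of this transported inner product in $(iii)\Rightarrow(i)$: the unitary identification of $V_1$ with each $V_i$ must be independent (modulo the block stabilizer) of the connecting semigroup element chosen, and the stabilizer must act by genuine unitaries rather than merely bounded invertibles. This is exactly where commutativity of the idempotents, the two-sided norm bound, Lemma~\ref{bounded-powers-unitary}, and Lemma~\ref{polynomials-zero} (to exclude secular growth and force the off-diagonal blocks into the unitary group) must be combined; a close second is the boundedness step in $(ii)\Rightarrow(iii)$, where peripheral Jordan blocks have to be excluded without the bound being available a priori.
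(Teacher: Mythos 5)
Your proposal follows the paper for (i)$\implies$(ii), but the other two legs have genuine gaps. In (ii)$\implies$(iii) the upper bound is the crux, and your argument for it cannot work as sketched: boundedness of $\cS$ is a \emph{semigroup-level} statement and cannot be extracted one element at a time. If some $T\in\cS$ had a nontrivial Jordan block for a unimodular eigenvalue, then $\norm{T^{n_k}}\to\infty$ and your sequence $T^{n_k}$ simply fails to converge; this contradicts nothing, because boundedness of $\cS$ is exactly what is still unproved --- and conversely, the convergence of $T^{n_k}$ to the peripheral spectral idempotent already presupposes that no such Jordan block exists. The paper avoids this circularity with a global argument you are missing: since $\sigma(T)\subseteq\{0\}\cup\bbT$, every $T\in\cS$ satisfies $\abs{\tr(T)}\le n$, and an \emph{irreducible} semigroup on which the trace functional is bounded is itself bounded (Okni\'nski, and Radjavi--Rosenthal, as cited in the paper). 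Your lower-bound sketch is also broken: a ``normalized-limit argument'' fails because $\cS$ is not closed under scalar multiplication, so limits of $T_j/\norm{T_j}$ need not lie in $\cS$; the paper instead takes a nonzero nilpotent $T$, notes that the ideal $\cS T\cS$ is irreducible, extracts a non-nilpotent $R=ATB$ by Levitzki's theorem, and gets $\norm{T}\ge 1/C^2$ by submultiplicativity.

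Your (iii)$\implies$(i) is a genuinely different route from the paper (which proves (iii)$\implies$(ii) by induction on minimal rank together with Lemma~\ref{polynomials-zero}, and then reuses (ii)$\implies$(i)), and the idea is not hopeless: under (iii) every non-nilpotent element automatically has spectral radius $1$, and the block stabilizers $P_i\cS P_i|_{\range P_i}$ are bounded closed semigroups of invertibles, hence groups. But you leave unresolved precisely the two points that carry the weight of the paper's construction. First, you assert that the ranges of the pairwise-disjoint minimal idempotents span $\bbC^n$; this is the paper's Case~2, which requires an argument (via Lemma~\ref{irreducibility-criterion} and Lemma~\ref{idempotents} one manufactures an idempotent supported in the complementary subspace, contradicting maximality of the family). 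Second, the consistency of your transported inner product --- which you yourself call ``the main obstacle'' --- is exactly what the paper proves and you do not: for nonzero $Y\in P_j\cS P_1$ one shows $Y^*Y$ commutes with every element of the irreducible unitary block group, hence $Y^*Y=\alpha P_1$ and $YY^*=\alpha P_j$, so the connecting maps are scalar multiples of unitaries and the identification of blocks is canonical; listing the tools that ``must be combined'' is not a substitute for this argument. A smaller but real error: Lemma~\ref{bounded-powers-unitary} unitarizes a \emph{single} matrix, whereas simultaneous unitarization of the whole block group needs the classical bounded-group theorem. As written, neither (ii)$\implies$(iii) nor (iii)$\implies$(i) is proved, so the cycle does not close.
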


\begin{proof}
%
Let us first show the implication \makebox{(i)$\Longrightarrow$(ii)}. If $\cS$ is simultaneously similar to a semigroup of partial isometries, then, by Theorem~\ref{part-isom-semigroups}, after a simultaneous similarity every member of $\cS$ is a block matrix with unitary blocks, such that each block row and each block column has at most one nonzero block. Obviously, every idempotent in~$\cS$ is a block-diagonal matrix, such that the diagonal blocks are equal to either zero or the identity matrix. It is easy to see that all such idempotents commute. Also, a power of every matrix $T$ from such a semigroup is a block-diagonal matrix with blocks from $\cU\cup\{0\}$, so that $\sigma(T)\subseteq\{0\}\cup\bbT$. This shows that \makebox{(i)$\Longrightarrow$(ii)}.

Next, we will show that \makebox{(ii)$\Longrightarrow$(i)} holds. Suppose that $\cS$ is a semigroup such that all the idempotents in $\cS$ commute and $\sigma(T)\subseteq\{0\}\cup\bbT$ for every matrix $T\in\cS$. Observe that the trace functional is bounded on $\cS$; in fact, $\abs{\tr(T)}\le n$ for every $T\in\cS$. Since $\cS$ is irreducible, it follows from \cite[Proposition 4.9]{Okn98} (see also \cite[Theorem 1]{RR08}) that $\cS$ is bounded. 

Pick a maximal family $\{P_1,\dots,P_m\}$ of disjoint idempotents in~$\cS$. Clearly, each $P_i$ is minimal in the sense that no other nonzero idempotent in $\cS$ has range contained in that of~$P_i$. Applying \cite[Lemma 3.1.6(iii)]{RR00} to $P_i\cS P_i$, we may assume that the rank of each $P_i$ is equal to the minimal nonzero rank of matrices in~$\cS$ which we will denote by~$k$ (we warn the reader that this statement in \cite{RR00} has the additional condition that $\cS$ is closed under the multiplication by positive scalars. This condition is used to produce matrices in $\cS$ of spectral radius one and, therefore, is not essential in our context). We will consider two cases.

{\it Case 1}. Suppose that the idempotents $\{P_i\mid i=1,\dots,m\}$ span the entire space: $\span\{\range P_i\mid i=1,\dots,m\}=\bbC_n$. Applying a similarity to~$\cS$, we can write $P_j=E_{jj}\otimes I_k$ for each $j\in\{1,\dots,m\}$, where $I_k$ is the $k\times k$ identity matrix. In particular, this similarity makes sure $P_i=P_i^*$ for all $i$ and $\range P_i\perp\range P_j$ for $i\ne j$.

Consider, for each $i\in\{1,\dots,m\}$, the set $\cU_i=P_i\cS P_i|_{\range P_i}\setminus \{0\}$ as a set of of $k\times k$ matrices. Clearly, every member of $\cU_i$ is an invertible $k\times k$ matrix and $\cU_i$ is, in fact, a semigroup. Also, $\cU_i$ is bounded and norm closed, and the only idempotent matrix in $\cU_i$ is the identity matrix. By \cite[Lemma 3.1.6]{RR00}, each $\cU_i$ is a bounded group of matrices. Therefore, each $\cU_i$ is similar to a group of unitaries. Applying a simultaneous block-diagonal similarity to $\cS$, we may assume that each $\cU_i$ consists of unitary matrices. Notice that by Lemma~\ref{irreducibility-criterion}, each $\cU_i$ is also irreducible. Observe that we are done if $m=1$. Therefore, we will assume that $m\ge 2$.

For every pair $i,j\in\{1,\dots,m\}$, denote by $\cS_{ij}$ the set $P_i\cS P_j$. Clearly, each $\cS_{ij}$ is a subset of $\cS$ consisting of matrices of the form $E_{ij}\otimes T$ (where $T$ is a $k\times k$ matrix) and every nonzero element of $\cS_{ij}$ has rank~$k$. We will show that, first, $\cU_1=\cU_2=\dots=\cU_m=:\cU$ and, second, after a simultaneous similarity applied to~$\cS$, $\cS_{ij}=\{E_{ij}\otimes T\mid T\in\cU\cup\{0\}\}$ for all $i$ and~$j$.

To this end, fix an index $j\ne 1$, and, for notational simplicity, let $\cV=\cS_{11}$, $\cW=\cS_{jj}$, $\cX=\cS_{1j}$, and $\cY=\cS_{j1}$. The restrictions of members of these sets to the span of the ranges of $P_1$ and $P_j$ look as
$$
\cV=
\begin{bmatrix}
* & 0\\
0 & 0
\end{bmatrix},\ 
\cX=
\begin{bmatrix}
0 & *\\
0 & 0
\end{bmatrix},\ 
\cY=
\begin{bmatrix}
0 & 0\\
* & 0
\end{bmatrix},\ \mbox{and }
\cW=
\begin{bmatrix}
0 & 0\\
0 & *
\end{bmatrix}.
$$
Then the following inclusions, in particular, hold:
$$
\cV\cX\subseteq\cX,\quad
\cX\cY\subseteq\cV,\quad
$$
and
$$
\cY\cV\subseteq\cY,\quad
\cY\cX\subseteq\cW.
$$
We claim that for every $U\in\cV$ and a nonzero $Y\in\cY$, there is $X\in\cX$ such that $XY=U$. To see this, pick, by irreducibility, a matrix $X_1\in\cX$ such that $X_1Y\ne 0$ (just let $A=P_1$ and $B=P_jY$ in Lemma~\ref{irreducibility-criterion}). As observed above, $X_1Y\in\cV$, so we can write
$$
U_1:=X_1Y=\begin{bmatrix}
V_1 & 0\\
0 & 0
\end{bmatrix}.
$$
Since $V_1\ne 0$, we get $V_1\in\cU_1$, a unitary matrix. Therefore $V_1^*\in\cU_1$, so that $U_1^*\in\cV$ and $U_1^*U_1=P_i$. Define $U_2=UU_1^*\in\cV$. Again, by the observation made before, $U_2X_1\in\cX$. Put $X=U_2X_1$. Then $XY=U_2X_1Y=UU_1^*X_1Y=UU_1^*U_1=UP_1=U$. This proves the claim. 

Fix a nonzero $Y\in\cY$ and let $V\in\cV$ be arbitrary, nonzero. By the previous claim, there is $X\in\cX$ such that $XY=V$. Define $W=YX\in\cW$. Clearly, $\sigma(W)=\sigma(V)\ne\{0\}$, so that $W\ne 0$. Also
$$
YV=Y(XY)=(YX)Y=WY.
$$
Thus $(YV)^*=(WY)^*$. Multiplying these two identities and noticing that $W^*W=P_j$, $P_jY=Y$, $VV^*=P_1$, and $P_1Y^*=Y^*$, we get: 
$$
(YV)^*(YV)=(WY)^*(WY),
$$
or
$$
V^*Y^*YV=Y^*Y,
$$
so that 
$$
Y^*YV=VY^*Y.
$$
Since $V$ is an arbitrary member of $\cV$ and $\cU_1$ is irreducible, we conclude that $Y^*Y=\alpha P_1$, for some (nonzero) scalar $\alpha$. Similarly, $YY^*=\beta P_j$, for some scalar $\beta$. It follows that $\alpha=\beta\ne 0$. 

Repeating this for each index $j> 1$, we get: for each $j\in\{2,\dots,m\}$, there is a matrix $Y_j\in\cS_{j1}$ and a non-zero scalar $\alpha_j$ such that $Y_j^*Y_j=\alpha_j P_1$ and $Y_jY_j^*=\alpha_j P_j$.

Apply the similarity
$$
\begin{bmatrix}
I_k & 0 & \dots & 0\\
0 & \alpha_2 Y_2 & & \\
 & & \ddots & \\
0 & 0 & \dots &\alpha_mY_m
\end{bmatrix}
$$
to $\cS$. It is easy to see that, after this similarity, the matrix $E_{1j}\otimes I_k\in\cS_{1j}$, and the matrix $E_{j1}\otimes I_k\in\cS_{j1}$, for each $j\in\{1,\dots,m\}$. Then for all $i,j\in\{1,\dots,m\}$, we have $E_{ij}\otimes I_k=(E_{i1}\otimes I_k)\cdot(E_{1j}\otimes I_k)\in\cS_{ij}$, and this, clearly, implies that $E_{ij}\otimes T\in\cS_{ij}\iff E_{lr}\otimes T\in\cS_{lr}$, for all $i,j,l,r\in\{1,\dots,m\}$ and any $k\times k$ matrix $T$.

Let $T\in\cS$ be an arbitrary member of~$\cS$. Write $T=(T_{ij})_{i,j=1}^m$, where each $T_{ij}$ is a $k\times k$ matrix. Since $P_iTP_j\in\cS_{ij}$, it is clear that each $T_{ij}\in\cU\cup\{0\}$. We claim that each block row and each block column has at most one nonzero block. This, clearly, will imply that $T$ is a partial isometry. Suppose that for some $i,j_1$ and $j_2$, we have $T_{ij_1}\ne 0$ and $T_{ij_2}\ne 0$. Multiplying $T$ by $E_{ii}\otimes I_k$ on the left, if necessary, we may assume that $T_{kl}=0$ for all $l$ and all $k\ne i$. Also, multiplying by $E_{j_1i}\otimes I_k$ on the left, we may assume that $j_1=i$. It follows that the matrix $E_{ii}\otimes T_{ii}^*\in\cS$. Therefore, multiplying $T$ by this matrix on the left, we may assume that $T_{ii}=I_k$. It follows that $T$ is an idempotent which is different from $E_i$ and disjoint from it, which is impossible. The case of two nonzero blocks in a block column is similar.

{\it Case 2}. Suppose that the projections $P_1,\dots,P_m$ do not span the entire space. Let us show that this case cannot be realized. Indeed, repeating the argument of {\it Case~1}, we obtain that, relative to the decomposition $(\range P_1)\oplus\dots\oplus(\range P_m)\oplus H$, where $H$ is the orthogonal complement to the span of the ranges of projections $P_1,\dots, P_m$, each member $T\in\cS$ is written as
$$
T=\left[\begin{array}{ccc|c}
T_{11} & \dots & T_{1m} & X_1\\
\vdots & & \vdots & \vdots\\
T_{m1} & \dots & T_{mm} & X_{m}\\
\hline
Y_1 & \dots & Y_m & Z
\end{array}\right],
$$
where the blocks $T_{ij}$ ($1\le i,j\le m$) come from a fixed group of unitaries. Again, repeating the argument in the last part of {\it Case~1}, we obtain that if $X_1\ne 0$, then $T_{1j}=0$ for all~$j$. Multiplying by $P_1$ on the left, we may assume that $X_1$ is the only nonzero block in~$T$. Using Lemma~\ref{irreducibility-criterion}, we can find a matrix $Q\in\cS$ such that only the $(m+1,1)$-block of $Q$ is not zero and $QT\ne 0$. Applying Lemma~\ref{idempotents} to $QT$, we conclude that $\cS$ admits an idempotent $P$ such that $\range(P)\subseteq H$ and $\ker(P)\supseteq H^\perp$, contrary to the maximality of the family $\{P_1,\dots,P_m\}$. This finishes the proof of the equivalence \makebox{(i)$\iff$(ii)}.

\bigskip

Let us now prove the equivalence \makebox{(ii)$\iff$(iii)}. First, we will establish the easier implication \makebox{(ii)$\implies$(iii)}. Suppose that $\sigma(T)\subseteq\{0\}\cup\bbT$ for all $T\in\cS$. Using the trace considerations, as before, we conclude that $\cS$ is a bounded semigroup. That is, there is $C\ge 1$ such that $\norm{T}\le C$ for all $T\in\cS$. Let us establish the existence of the lower bound. If $T\in\cS$ is not nilpotent, then $r(T)=1$, so that $\norm{T}\ge r(T)=1$. If $T\ne 0$ is nilpotent, consider the ideal $\cJ=\cS T\cS$ in $\cS$. By \cite[Lemma 2.1.10]{RR00}, $\cJ$ is an irreducible semigroup and, hence, by the Levitzki's theorem (\cite{Lev31}, see also \cite[Theorem 2.1.7]{RR00}), $\cJ$ contains a non-nilpotent operator~$R$. We can write $R=ATB$ where $A,B\in\cS$. It follows that $1\le\norm{R}\le\norm{A}\cdot\norm{T}\cdot\norm{B}\le C^2\norm{T}$, so that $\norm{T}\ge \frac{1}{C^2}$.

\medskip

Finally, let us prove the implication \makebox{(iii)$\implies$(ii)}. We will generally follow the ideas of the proof of \cite[Lemma 2.4]{OR97} which shows that if every operator $T$ in an irreducible semigroup $\cS$ has spectral radius one, then $\sigma(T)\subseteq\{0\}\cup\bbT$ for all $T\in\cS$. However, we need to adapt that proof to our problem because the semigroup in \cite[Lemma 2.4]{OR97} does not have nilpotent elements and, in particular, does not have zero divisors.

Clearly, the boundedness of $\cS$ implies that $\sigma(T)\subseteq\bbD$ for every $T\in\cS$. Also, every non-nilpotent member of $\cS$ must have spectral radius one, for otherwise $\cS$ would contain elements of arbitrarily small nonzero norm. We need to prove that for each $T\in\cS$, $\sigma(T)\setminus\bbT\subseteq\{0\}$.

Suppose that this is not the case. Fix a matrix $T\in\cS$ of the minimal possible rank such that $T$ has a non-zero eigenvalue of modulus less than one. Write the Jordan form of $T$ as
$$
T=\begin{bmatrix}
U & 0 \\
0 & R
\end{bmatrix},
$$
where $\sigma(U)\subseteq\bbT$ and $r(R)<1$. Notice that $U\ne 0$ and $R\ne 0$. By Lemma~\ref{bounded-powers-unitary}, $U$ is a unitary diagonal matrix. Let $\lambda_1,\dots,\lambda_s$ be distinct eigenvalues of $U$ and $E_1,\dots,E_s$ be the correspondent spectral projections. Similarly, let $\mu_1,\dots,\mu_t$ be distinct eigenvalues of~$R$ and $F_1,\dots,F_t$ the correspondent spectral projections. Then we can write 
$$
\begin{bmatrix}
U & 0 \\ 0 & 0
\end{bmatrix}=\sum_{i=1}^s\lambda_iE_i\quad\mbox{and}\quad \begin{bmatrix}
0 & 0 \\ 0 & R
\end{bmatrix}=\sum_{i=1}^t\mu_i(F_i+N_i),
$$
where each $N_i$ is a nilpotent matrix commuting with~$F_i$. Observe that a sequence of powers of $T$ converges to an idempotent that we will denote by~$P_T$. This idempotent is written as
$$
P_T=\begin{bmatrix}
I_{\rk U} & 0 \\ 0 & 0
\end{bmatrix}
$$
and belongs to $\cS$ because $\cS$ is norm closed.

Since $P_T\in\cS$, the set $\cS_1=P_T\cS P_T$ forms a subsemigroup of~$\cS$. Clearly, the semigroup $\cS_2=\cS_1|_{\range P_T}$, considered as a set of $(\rk U)\times(\rk U)$ matrices, is an irreducible norm closed semigroup. Also, every member of $\cS_2$ has rank strictly smaller than $\rk(T)$, because $R\ne 0$. By the choice of $T$, the spectrum of every element of $\cS_2$ is contained in $\{0\}\cup\bbT$. Moreover, the idempotent elements of $\cS_2$ commute. Therefore, applying a similarity to $\cS$ and using the (already proved) equivalence \makebox{(i)$\iff$(ii)}, we may assume that $\cS_2$, as well as $\cS_1$, consists of partial isometries.

According to Theorem~\ref{part-isom-semigroups}, every matrix $A\in\cS_2$ is a block matrix whose blocks are elements of a fixed group of unitaries of size equal to the minimal nonzero rank of elements of $\cS_2$ which we will denote by~$k$. Moreover, at most one block in each block row and in each block column of $A$ is nonzero. In particular, this is true for the matrix $U$. Replacing $T$ with its power, we may assume that $U$ is block diagonal. Since $U$ is invertible, its every diagonal block is nonzero and, hence, is a unitary matrix. Furthermore, replacing $T$ with a higher power, we may also assume that $\norm{R}<\frac{1}{C^2}$ where $C=\sup\{\norm{S}\mid S\in\cS\}$. Notice that Theorem~\ref{part-isom-semigroups} also implies that the matrix
$$
P_0=\begin{bmatrix}
E_{11}\otimes I_k & 0\\
0 & 0
\end{bmatrix}
$$
belongs to~$\cS_1$.

Let $n_0\in\bbN$ be such that $N_i^{n_0}=0$ for all~$i$. Denote by $I$ the $n\times n$ identity matrix. Since $\cS$ is irreducible, Lemma~\ref{irreducibility-criterion} implies that there is $B\in\cS$ such that $(I-P_T)T^{n_0}(I-P_T)BP_0\ne 0$. Thus, again, by Lemma~\ref{irreducibility-criterion}, there exists $A\in\cS$ such that $P_0A(I-P_T)T^{n_0}(I-P_T)BP_0\ne 0$. Since $P_0\in\cS$, there is no loss of generality in assuming $P_0A=A$ and $BP_0=B$. It follows that, denoting the number of blocks in block rows of $\cS_2$ by~$m$, we may write $A$ and $B$ as
$$
A=\begin{bmatrix}
\widetilde A & A_{m+1} \\
0 & 0
\end{bmatrix}
\quad
\mbox{and}
\quad
B=\begin{bmatrix}
\widetilde B & 0 \\
B_{m+1} & 0
\end{bmatrix},
$$
where $\widetilde A$ and $\widetilde B$ are block matrices such that only the first block row of $\widetilde A$ and only the first block column of $\widetilde B$ may have nonzero blocks. Write the first block row of $\widetilde A$ as $\left[A_1,A_2,\dots,A_m\right]$, and the first block column of $\widetilde B$ as $\left[B_1,B_2,\dots,B_m\right]^T$. Notice that the condition $A(I-P_T)T^{n_0}(I-P_T)B\ne 0$ implies $A_{m+1}R^{n_0}B_{m+1}\ne 0$. Since $\norm{R}<\frac{1}{C^2}$, we have $0<\norm{A_{m+1}R^{n_0}B_{m+1}}<1$.

Define 
\begin{equation}\label{W_n}
W_n=AP_TT^nP_TB=
\begin{bmatrix}
\widetilde A U^n\widetilde B & 0 \\
0 & 0
\end{bmatrix}=
\sum_{i=1}^s\lambda_i^nAP_TE_iP_TB\in\cS
\end{equation}
and
\begin{equation}
\label{Q_n}
\begin{split}
Q_n
=AT^nB-W_n&
=\begin{bmatrix}
A_{m+1}R^nB_{m+1} & 0 \\
0 & 0
\end{bmatrix}=\\
&=\sum_{i=1}^t\mu_i^n\sum_{j=0}^{\min\{n,n_0-1\}}\left(\!\!
\begin{array}{c}
n\\
j
\end{array}\!\!
\right)A(I-P_T)F_iN_i^j(I-P_T)B.
\end{split}
\end{equation}
Since $W_n=P_0W_nP_0$ and $AT^nB=P_0AT^nBP_0$ for all~$n$, we may write $W_n$ and $AT^nB$ in the form
$$
W_n=\begin{bmatrix}
E_{11}\otimes V_n & 0\\
0 & 0
\end{bmatrix}\in\cS
\quad\mbox{and}\quad
AT^nB=\begin{bmatrix}
E_{11}\otimes Z_n & 0\\
0 & 0
\end{bmatrix}\in\cS,
$$
where each $V_n$ and $Z_n$ is either a unitary or zero. If $V_{n_0}$ were equal to zero, then we would have $\norm{Z_{n_0}}=\norm{E_{11}\otimes Z_{n_0}-E_{11}\otimes V_{n_0}}=\norm{A_{m+1}R^{n_0}B_{m+1}}\in(0,1)$, which is impossible. Therefore, $V_{n_0}$ is not zero.

Observe that if $U_1,U_2,\dots,U_m$ are the diagonal blocks of~$U$, then $V_{n}=A_1U^{n}_1B_1+A_2U^{n}_2B_2+\dots+A_mU^{n}_mB_m$ for every $n\in\bbN$. Only one of the matrices $\{A_1,A_2,\dots,A_m\}$ and only one of the matrices $\{B_1,B_2,\dots,B_m\}$ are nonzero, by the properties of~$\cS_2$. It follows that there exists $i_0\in\{1,\dots,m\}$ such that $A_{i_0}\ne 0$, $U^{n_0}_{i_0}\ne 0$, and $B_{i_0}\ne 0$. Hence, $A_{i_0}$, $U_{i_0}$, and $B_{i_0}$ are all unitaries. This implies that $V_n\ne 0$ for all $n$. Thus, each $V_n$ is necessarily a unitary. Also, it is clear that $\norm{R^n}\le\norm{R}$ for all~$n$, so that $\norm{A_{m+1}R^nB_{m+1}}<1$ for all~$n$. Therefore $\norm{Z_n}\ge\norm{V_n}-\norm{A_{m+1}R^nB_{m+1}}>0$. We conclude that $V_n$ and $Z_n$ are unitary matrices for all $n\in\bbN$.

This, in particular, implies that 
$$
W_n^*Q_n+Q_n^*W_n+Q_n^*Q_n=0.
$$
Fix an arbitrary vector $x$ from the range of~$P_0$. Then
\begin{equation}
\label{eq-1}
\langle Q_nx,W_nx\rangle+\langle W_nx,Q_nx\rangle+\langle Q_nx,Q_nx\rangle=0.
\end{equation}
Notice that, by the definition of $W_n$ and $Q_n$ (formulas~\eqref{W_n} and~\eqref{Q_n}), the left hand side of this equation can be viewed as a polynomial in $\alpha_1,\dots,\alpha_r$, where $\alpha_1,\dots,\alpha_r$ are all the distinct products of the form $\lambda_i\overline\mu_j$, $\mu_i\overline\lambda_j$, and $\mu_i\overline\mu_j$. To be more precise, the polynomial is written in the form
\begin{equation}
\label{polynomial}
\sum_i
\left[
\sum_{j=1}^{\min\{n,n_0-1\}}
\left(\!\!
\begin{array}{c}
n\\
j
\end{array}\!\!
\right)\gamma_{ij}
\right]\alpha_i^n
+
\sum_i
\left[
\sum_{j=1}^{\min\{n,n_0-1\}}
\left(\!\!
\begin{array}{c}
n\\
j
\end{array}\!\!
\right)^2\delta_{ij}
\right]\alpha_i^n,
\end{equation}
where $\gamma_{ij}$ and $\delta_{ij}$ denote some scalars independent on~$n$. Since the quantity
$$
\left(\!\!
\begin{array}{c}
n\\
j
\end{array}\!\!
\right)=\frac{1}{j!}\ n(n-1)\cdots(n-j+1)
$$
can be viewed as a polynomial in~$n$, we conclude that the coefficients in front of each $\alpha_i^n$ in the formula~\eqref{polynomial} are all polynomials in~$n$. Moreover, if $n\ge n_0$, the coefficients in these polynomials do not depend on~$n$. Hence, for $n\ge n_0$ the equation~\eqref{eq-1} transforms into
\begin{equation}
\label{eq-sum-poly}
\sum_{i}p_i(n)\alpha_i^n=0,\quad n\ge n_0,
\end{equation}
where $p_i$ are polynomials in one variable. By Lemma~\ref{polynomials-zero}, each $p_i$ is identically equal to zero. 
Consider the circle of the smallest radius $r$ on which there is a point of spectrum of~$T$. By the assumptions about the matrix~$T$, $0<r<1$. Clearly, $r^2$ is among $\alpha_i$, and $\alpha_i$ is equal to $r^2$ only if it is of the form $\mu_k\overline\mu_k$, for some~$k$. For all such~$i$, the equation~\eqref{eq-sum-poly} yields
%
%
$$
\norm{A(I-P_T)F_iN_i^j(I-P_T)Bx}=0,
$$
for all~$j$. Since $x\in\range(P_0)$ was taken arbitrary, this implies that
$$
A(I-P_T)F_iN_i^j(I-P_T)B=0
$$
for all $j$ and for all $i$ such that $\alpha_i=r^2$. This, however, shows that in the definition~\eqref{Q_n} of~$Q_n$, there are no summands corresponding to the eigenvalues of modulus~$r$. It follows, in particular, that if $r_1$ is the smallest radius of a circle on which there is a point of spectrum of $T$ of modulus in the interval $(r,1)$, then $\alpha_i$ can be equal to $r_1$ only if it is, again, of the form $\mu_k\overline\mu_k$, for some~$k$.

Repeating this argument inductively, we get: 
$$
A(I-P_T)F_iN_i^j(I-P_T)B=0
$$
for all $i$ and $j$. This clearly implies that $A(I-P_T)T^{n_0}(I-P_T)B=0$. This, however, contradicts the choice of $A$ and~$B$.
\end{proof}

We will finish the paper with the following example which shows that the assumption that the semigroup is norm closed is important in Theorem~\ref{main-result}. The main problem is that the condition that the idempotents in a semigroup commute may not survive after taking the norm closure.

\begin{example}\label{closure-important}
Let $t>0$ be an irrational real number. Consider the semigroup
$$
\cS=
\left\{
\begin{bmatrix}
e^{int} & 0\\
e^{imt} & 0
\end{bmatrix},
\begin{bmatrix}
0 & e^{int}\\
0 & e^{imt}
\end{bmatrix}
\mid n,m\in\bbN
\right\}.
$$
Obviously, $\cS$ is irreducible. Also, if $T\in\cS$ is arbitrary, then $\norm T=\sqrt 2$ and $\sigma(T)\in\{0\}\cup\bbT$. Since $\cS$ does not contain idempotents, the conditions~(ii) and~(iii) of Theorem~\ref{main-result} are satisfied. If $\cS$ were similar to a semigroup of partial isometries, so would be~$\overline\cS$. By Theorem~\ref{part-isom-semigroups}, the idempotents of $\overline\cS$ must commute which is not true since $\overline\cS$ contains the matrices
$$
\begin{bmatrix}
1 & 0\\
1 & 0
\end{bmatrix}
\quad\mbox{and}\quad
\begin{bmatrix}
0 & 1\\
0 & 1
\end{bmatrix}.
$$
So, the condition~(i) of Theorem~\ref{main-result} is violated.
\end{example}


\begin{thebibliography}{00}

  \bibitem{Lev31}
  J.~Levitzki,
  \emph{\" Uber nilpotente unterringe}, Math. Ann. Volume 105, Number 1 (1931), 620--627.

  \bibitem{NS82}
  M.~A.~Na\u imark, A.~I.~\v Stern,
  \emph{Theory of group representations.} Springer-Verlag, New York, 1982.
  
  \bibitem{Okn98} 
  J.~Okni\'nski,
  \emph{Semigroups of matrices}. Series in Algebra, 6. World Scientific Publishing Co., Inc., River Edge, NJ, 1998.

 \bibitem{OR97} 
  M.~Omladi\v c and H.~Radjavi,
  \emph{Irreducible semigroups with multiplicative spectral radius,} Linear Algebra Appl. 251 (1997), 59--72. 

 \bibitem{PR} 
  A.~I.~Popov and H.~Radjavi,
  \emph{Semigroups of partial isometries,} preprint.

 \bibitem{RR08} 
  H.~Radjavi, P.~Rosenthal,
  \emph{Limitations on the size of semigroups of matrices},  Semigroup Forum 76 (2008), no. 1, 25--31. 

 \bibitem{RR00} 
  H.~Radjavi, P.~Rosenthal,
  \emph{Simultaneous triangularization}. Springer-Verlag, New York, 2000.

\end{thebibliography}
\end{document}